\def\height{\operatorname{height}}
\def\ann{\operatorname{ann}}
\def\cube#1#2#3#4#5#6#7#8{
& #5 \ar[rr] \ar[dl] \ar@{-}[d] && #6 \ar[dd] \ar[dl] \\
#1 \ar[rr] \ar[dd]  & \ar[d] & #2 \ar[dd] \\
& #7 \ar@{-}[r] \ar[dl] & \ar[r] & #8 \ar[dl] \\
#3 \ar[rr] && #4 \\
}
\def\b{\beta}
\def\ker{\operatorname{ker}}
\def\L{\Lambda}
\def\dm{\operatorname{dim}}
\def\rank{\operatorname{rank}}
\def\Tor{\operatorname{Tor}}
\def\Spec{\operatorname{Spec}}
\def\fl{{\mathsf{fl}}}
\newcommand{\Z}{\mathbb{Z}}
\newcommand{\fp}{{\mathfrak p}}
\newcommand{\fm}{{\mathfrak m}}
\numberwithin{equation}{section}
\theoremstyle{plain} %% This is the default, anyway
\newtheorem{thm}[equation]{Theorem}
\newtheorem{introthm}{Theorem}
\newtheorem*{introconj*}{Conjecture}
\newtheorem*{introthm*}{Theorem}
\newtheorem{prop}[equation]{Proposition}
\theoremstyle{definition}
\newtheorem{defn}[equation]{Definition}
\theoremstyle{remark}
\def\Perf{\operatorname{Perf}}
\newcommand{\id}{\operatorname{id}}
\def\len{\operatorname{\ell}}
\def\rk{\operatorname{rank}}
\def\and{{ \text{ and } }}
\def\rnk{\operatorname{rank}}
\def\th{^{\mathrm{th}}}
\def\nd{^{\mathrm{nd}}}
\begin{document}
\begin{abstract}
The Buchsbaum-Eisenbud-Horrocks Conjecture predicts that the $i\th$ Betti number $\beta_i(M)$ of 
a non-zero module $M$ of finite length and finite projective dimension over a local
ring $R$ of dimension $d$ should be at least ${d \choose i}$. It would follow from
the validity of this conjecture that $\sum_i \beta_i(M) \geq 2^{d}$. We prove the latter inequality 
holds in a large number of cases and that, when $R$ is a complete
intersection in which $2$ is invertible, 
equality holds if and only if  $M$ is isomorphic to the quotient of $R$ by a regular sequence of elements.
\end{abstract}

\title{Total Betti numbers of modules of finite projective dimension}

\author{Mark E. Walker}
\address{Department of Mathematics, University of Nebraska, Lincoln, NE 68588-0130, USA}
\email{mark.walker@unl.edu}

\thanks{This work was partially supported by  grant  \#318705  from the Simons Foundation.}

\maketitle
%\tableofcontents

\section{Introduction} 

We recall a long-standing conjecture (see \cite[1.4]{BE} and \cite[Problem 24]{Ha}):

\begin{introconj*}[Buchsbaum-Eisenbud-Horrocks Conjecture] 
Let $R$ be a commutative Noetherian ring such that $\Spec(R)$ is connected and let $M$ be a non-zero, finitely generated $R$-module of finite projective
dimension.   For any finite projective resolution 
$$
0 \to P_d \to \cdots \to P_1 \to P_0 \to M \to 0
$$
of $M$ we have
$$
\rank_R(P_i) \geq {c \choose i} 
$$
where $c = \height_R(\ann_R(M))$, the height of the annihilator ideal of $M$.
\end{introconj*}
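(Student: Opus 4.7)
The plan splits into a standard reduction followed by a structural argument, with the latter being the principal difficulty. First, I would reduce to the case where $(R,\fm,k)$ is local Cohen--Macaulay of dimension $d = c$, $M$ is a non-zero finite length module of finite projective dimension, and $P_\bullet$ is a minimal free resolution. The reduction localizes $R$ at a prime of maximal codimension in the support of $R/\ann_R(M)$ (so that $c$ coincides with $\dim R_\fp$), then kills a maximal regular sequence inside $\ann_R(M)$; the ranks of the $P_i$ are preserved throughout. So it suffices to prove $\beta_i(M) \geq \binom{d}{i}$ for every non-zero finite length $M$ with $\pd_R M < \infty$ over a local Cohen--Macaulay ring of dimension $d$.

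Second, I would impose the paper's hypothesis that $R$ is a complete intersection in which $2$ is invertible, so that the Clifford-type action on $P = \bigoplus_i P_i$ (which produces the total rank bound $\sum_i \rank(P_i) \geq 2^c$) is available. The goal is to upgrade this $\Z/2$-graded Clifford action to a $\Z$-graded exterior algebra action: an action of $\bigwedge^\bullet V$ with $\dim_k V = c$ such that wedging with $\bigwedge^j V$ shifts homological degree by exactly $j$. Given such a refinement, any non-zero class in $P_0/\fm P_0$ generates a free rank-one $\bigwedge^\bullet V$-submodule whose degree-$i$ piece has $k$-dimension $\binom{c}{i}$ and injects into $P_i/\fm P_i$, delivering the BEH inequality. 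The equality case would fall out: equality forces $P_\bullet$ to be the free $\bigwedge^\bullet V$-module on one generator, that is, a Koszul complex, so $M \cong R/(f_1,\ldots,f_c)$.

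The main obstacle is exactly the $\Z$-graded refinement of the Clifford action. With $2$ invertible one can hope to split off the ``odd'' part of the Clifford structure and interpret it as wedging by $V$, but these odd operators a priori exist only up to homotopy and need not compose associatively into a genuine exterior algebra at the level of the complex $P_\bullet$. Making this rigid appears to require a strict higher matrix factorization model in which the odd operators square exactly to the Eisenbud CI-operators rather than merely up to homotopy, and in which the $c$ odd operators anticommute on the nose. This rigidity step is precisely what the total rank argument bypasses by working only with the $\Z/2$-graded totalization, and it is where genuinely new input is needed to upgrade the sum bound $\sum_i \beta_i(M) \geq 2^c$ to the individual bounds $\beta_i(M) \geq \binom{c}{i}$. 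A secondary obstacle is the reduction to the complete intersection hypothesis itself, since replacing $R$ by its associated graded can shrink the support of $M$ and therefore decrease $c$; honest BEH thus seems to require treating non-CI local rings directly, outside the Clifford framework.
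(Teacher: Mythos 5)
The statement you set out to prove is the Buchsbaum--Eisenbud--Horrocks Conjecture itself, which the paper states as an open conjecture and does not prove; the paper establishes only the much weaker consequence $\sum_i \rank_R(P_i) \geq 2^c$, and only under extra hypotheses (locally complete intersection with $M$ being $2$-torsion free, or $\Z/p \subseteq R$ for an odd prime $p$). More importantly, your proposal is not a proof: you yourself locate the central step --- rigidifying a homotopy Clifford action on $P_\bullet$ into a strictly associative, $\Z$-graded action of an exterior algebra $\bigwedge^\bullet V$ with $\dim_k V = c$ that shifts homological degree by exactly one --- and then acknowledge that ``genuinely new input is needed'' there. That step is exactly where the difficulty of the conjecture lives. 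You offer no construction of the odd operators, no argument that they can be made to anticommute and square appropriately on the nose rather than up to homotopy, and no evidence that such a rigidification exists; if it did, the bounds $\beta_i(M)\geq\binom{c}{i}$ would follow for all complete intersections, which remains open. A strategy outline whose key lemma is an admitted obstacle is a research program, not a proof.

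Two smaller points. First, your reduction is partly garbled: after localizing at a minimal prime $\fp \supseteq \ann_R(M)$ of height $c$ (which is precisely how the paper reduces Theorem 1 to Theorem 2), the ring $R_\fp$ is automatically Cohen--Macaulay by Auslander--Buchsbaum together with the New Intersection Theorem, whereas ``killing a maximal regular sequence inside $\ann_R(M)$'' would drop the dimension to zero and in general destroy finiteness of projective dimension; that step should be deleted. Second, even for the total-rank bound the paper does not use a Clifford action: it works in $K_0^\fl(R)$ with the second Adams operation, writing $\psi^2[F_\cdot]=[S^2(F_\cdot)]-[\L^2(F_\cdot)]$ when $\frac12\in R$, invoking the Gillet--Soul\'e identity $\chi\circ\psi^2=2^d\cdot\chi$ (or the Dutta multiplicity and the Kurano--Roberts result in characteristic $p$), and then bounding the lengths of the homology of $F_\cdot\otimes_R F_\cdot$. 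So even the portion of your plan that you treat as ``available'' is achieved in the paper by a genuinely different, $K$-theoretic mechanism, and the portion that would yield the actual conjecture is missing.
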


The validity of the Buchsbaum-Eisenbud-Horrocks Conjecture would imply
that the ``total rank'' of any projective resolution of $M$ is at least $2^c$.
In this paper,  we prove this latter inequality holds in a large number of cases:

\begin{introthm} \label{introthm1}
Assume  $R$, $M$, and $P_\cdot$ are as in the Buchsbaum-Eisenbud-Horrocks Conjecture, and that in addition
\begin{enumerate}
\item 
$R$ is locally a complete intersection
% \footnote{A commutative Noetherian ring $R$ is locally a complete intersection provided that for 
% all $\fp \in \Spec(R)$, the  completion of the local ring $R_\fp$ is isomorphic to the
% quotient of a regular local ring by a regular sequence of elements. A result of Avramov \cite{AvramovFlat} shows that this is equivalent to the same condition holding
% for just each maximal ideal.}
 and $M$ is $2$-torsion free, or
\item $R$ contains $\Z/p$ as a subring for an odd prime $p$.
\end{enumerate}
Then $\sum_i \rank_R(P_i) \geq 2^c$, where $c = \height_R(\ann_R(M))$.
\end{introthm}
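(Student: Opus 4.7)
The plan is to reduce both cases to a single lower bound on the total rank of a matrix factorization of a specific ``generic hypersurface,'' and then establish that bound by representation-theoretic or $K$-theoretic arguments tailored to the characteristic hypothesis.

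First I would localize $R$ at a prime $\fp \supseteq \ann_R(M)$ of height $c$, reducing to the case that $(R,\fm,k)$ is local with $\dim R = c$. Passage to the completion and a faithfully flat extension of $k$ preserves both $\sum_i \rank_R P_i$ and the hypotheses, so I may assume $R$ is complete with $k$ large enough for any genericity arguments. Under case~(1), Cohen's structure theorem presents $R = Q/(f_1,\dots,f_c)$ with $(Q,\fm_Q)$ regular local and $f_1,\dots,f_c \in \fm_Q^2$ a regular sequence; under case~(2), the ring $Q$ may also be chosen to contain $\Z/p$.

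Second, from the minimal $R$-free resolution $P_\bullet$ of $M$ I would build a matrix factorization of the \emph{generic hypersurface} $F = \sum_{i=1}^c y_i f_i$ over the polynomial ring $S = Q[y_1,\dots,y_c]$, following a Shamash--Eisenbud type construction: lift $P_\bullet$ to a sequence of free $Q$-modules and package the resulting Eisenbud operators into a $\Z/2$-graded differential $d \colon E^+ \rightleftarrows E^-$ over $S$ with $d^2 = F \cdot \id$. This construction preserves total rank, giving
\[
  \rk_S(E^+) + \rk_S(E^-) \; = \; \textstyle\sum_{i=0}^d \rank_R(P_i).
\]
The problem is thereby reduced to showing that every matrix factorization of $F$ over $S$ has total rank at least $2^c$. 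Since the Jacobian of $F$ in the $y_i$ is the regular sequence $f_1,\dots,f_c$, the element $F$ is in a precise sense ``as non-degenerate as possible,'' and the factorization $(E^+,E^-)$ carries a natural collection of auxiliary operators arising from the partial derivatives $\partial d/\partial y_i$, whose (anti)commutators recover multiplication by the $f_i$.

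Third, in case~(1) -- where $2$ is invertible after localization, because $M$ is $2$-torsion free -- these operators generate an action of a rank-$2c$ Clifford algebra on $E^+ \oplus E^-$, and the bound $\geq 2^c$ then follows by comparing with the minimal $\Z/2$-graded faithful module over that algebra, which has rank exactly $2^c$ (the analogue in this context of the fact that a non-trivial matrix factorization of $x_1y_1+\dots+x_cy_c$ has total rank at least $2^c$ by an iterated tensor decomposition). In case~(2), I would instead appeal to the Adams operation $\psi^p$ on the homotopy category of matrix factorizations, whose eigenvalues are $p$-th roots of unity and whose eigenspace decomposition with respect to the $c$ commuting ``twist'' operators coming from the $y_i$ again forces a $2^c$ lower bound on the total rank of $E^\pm$.

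The step I expect to be hardest is the transition from the chain-level output of the Shamash--Eisenbud construction to the rigid algebraic object underlying the Clifford or Adams-operation argument: the Eisenbud operators commute only up to higher homotopies, and promoting them to honest generators of either a Clifford action (case~1) or a compatible $\psi^p$-action (case~2) requires a careful choice of model, likely in a homotopy category of curved dg-modules. Ensuring that no representation-theoretic ``collapse'' occurs -- so that one really obtains $2^c$ rather than only $2^{\lceil c/2\rceil}$ -- is precisely where the hypotheses ``$2$ invertible'' and ``$p$ odd'' must be used in an essential way, and this is where I would spend the bulk of the technical effort.
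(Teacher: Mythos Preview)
Your localization step is exactly how the paper reduces Theorem~\ref{introthm1} to the local finite-length statement (Theorem~\ref{introthm2}), including the observation that $M$ being $2$-torsion free forces $2\notin\fp$ and hence $2$ is a unit in $R_\fp$. After that point, however, the paper's argument diverges completely from yours and is much shorter. It never leaves the ring $R$: for case~(1) it uses the second Adams operation $\psi^2$ on $K_0^\fl(R)$, the identity $\psi^2[F_\bullet]=[S^2(F_\bullet)]-[\Lambda^2(F_\bullet)]$ valid when $\tfrac12\in R$, and the Gillet--Soul\'e relation $\chi\circ\psi^2=2^d\chi$ to get $2^d\len_R(M)=\chi(S^2F_\bullet)-\chi(\Lambda^2F_\bullet)$; a three-line chain of length inequalities then bounds the right side by $\sum_i\rk(F_i)\cdot\len_R(M)$. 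Case~(2) repeats this with the Dutta multiplicity $\chi_\infty$ replacing $\chi$ (via Kurano--Roberts and Roberts); the Frobenius appears only in the definition of $\chi_\infty$, and the oddness of $p$ is used solely so that $2$ is invertible and the $S^2/\Lambda^2$ splitting of $T^2(F_\bullet)$ exists. No matrix factorizations, Clifford algebras, or $\psi^p$ are involved.

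Your proposed route has real gaps beyond the homotopy-coherence issue you already flag. First, the rank identity $\rk_S(E^+)+\rk_S(E^-)=\sum_i\rank_R(P_i)$ is not what the Shamash--Eisenbud construction over $S=Q[y_1,\dots,y_c]$ produces: that construction adjoins divided-power (or polynomial) generators in the $y_i$ and is not rank-preserving in the sense you need. Second, even granting a chain-level lift, the operators $\partial d/\partial y_i$ satisfy $[\partial d/\partial y_i,d]=f_i\cdot\id$, but there is no mechanism forcing the mixed anticommutators $\{\partial d/\partial y_i,\partial d/\partial y_j\}$ to vanish (or to give a nondegenerate form), so one does not obtain an honest Clifford module and the $2^c$ bound from Clifford representation theory is unavailable. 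Third, in case~(2) your ``$\psi^p$ on matrix factorizations with $p\th$-root-of-unity eigenspaces forcing a $2^c$ decomposition'' is not an established argument, and it is unclear how $p\th$ roots of unity would produce a power of $2$; this part of the outline would need to be replaced entirely.
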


Theorem \ref{introthm2} below is the special case of Theorem \ref{introthm1} in which we assume 
$R$ is a local and $M$ has finite length.  
We record it as a separate theorem since  Theorem \ref{introthm1} follows immediately from it and also because in the local situation we
can say a bit more. 

For a local ring $R$ and a finitely generated $R$-module $M$, let $\beta_i(M)$ be the  $i\th$
Betti number of $R$, defined to be the rank of the $i\th$ free
module in the minimal free resolution of $M$.

\begin{introthm} \label{introthm2}
Assume $(R, \fm, k)$ is a local (Noetherian, commutative) ring of Krull dimension $d$ and that $M$ is a non-zero
$R$-module of finite length and finite projective dimension. 
If either
\begin{enumerate}
\item $R$ is the quotient of a regular local ring by a regular sequence of elements and $2$ is invertible in $R$, or
\item $R$ contains $\Z/p$ as a subring for an odd prime $p$,
\end{enumerate}
then $\sum_i \b_i(M) \geq 2^d$. 

Moreover, if the assumptions in (1) hold and
$\sum_i \b_i(M) = 2^d$, then $M$ is isomorphic to the quotient of $R$ by a regular sequence of $d$ elements.
\end{introthm}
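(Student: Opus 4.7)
The plan is to treat case (1) as the main case and to reduce case (2) to it by standard methods: completing $R$, using the Cohen structure theorem to present $\hat R$ as a quotient of a regular local ring of the same characteristic, and then cutting down to a complete-intersection presentation. Since $p$ is odd, $2$ is invertible in $R$, so case (1) will apply once the reduction is carried out.

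For case (1), write $R = Q/(\tilde f_1, \ldots, \tilde f_c)$ with $Q$ a regular local ring of dimension $d+c$ and $\tilde f_1, \ldots, \tilde f_c \in Q$ a regular sequence. Because $M$ has finite length and finite projective dimension over $R$, the Auslander--Buchsbaum formula forces $\pd_R(M) = \depth R - \depth M = d$, so the minimal $R$-free resolution $F_\bullet$ has length exactly $d$ and $\sum_i \b_i(M) = \sum_{i=0}^d \rank_R F_i$. Lift $F_\bullet$ to a graded free $Q$-module $\tilde F_\bullet$ with a differential $\tilde d$ whose square lands in $(\tilde f_1, \ldots, \tilde f_c)\tilde F_{\bullet-2}$. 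Eisenbud's theorem on systems of higher homotopies produces $Q$-linear maps $\sigma_i$ with $\tilde d \sigma_i + \sigma_i \tilde d = \tilde f_i \cdot \id$, and these data assemble $\tilde F_\bullet$ into a matrix factorization of the regular element $\sum_i \tilde f_i y_i$ over $S := Q[y_1, \ldots, y_c]$ whose total rank over $Q$ equals $\sum_i \b_i(M)$.

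The main step is then to bound the total rank of this matrix factorization from below by $2^d$. Reducing modulo $\fm_Q$ and specializing the $y_i$ at a generic point yields a module over a Clifford algebra $\operatorname{Cliff}(q)$ with $q$ a quadratic form over the residue field $k$. Finite length of $M$ is what forces $q$ to be non-degenerate on a $d$-dimensional space after suitable choices, and the invertibility of $2$ is what allows us to complete the square and put $q$ in standard diagonal form. Over a field in which $2$ is invertible, any nonzero $\operatorname{Cliff}(q)$-module has $k$-dimension at least $2^d$, the dimension of the unique simple spinor representation, which yields the desired inequality.

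The hardest step, and the one I expect to be the main obstacle, is the equality case. If $\sum_i \b_i(M) = 2^d$, then the Clifford module produced above is forced to be (a twist of) the unique simple module of minimal rank, and this rigidity must be propagated back up through the matrix-factorization equivalence to conclude that $\tilde F_\bullet$ is, up to isomorphism, the Koszul complex on a lift of some sequence $x_1, \ldots, x_d \in R$. Once that structural statement is established, the fact that $\pd_R(M) = d = \dim R$ forces $x_1, \ldots, x_d$ to be a regular sequence and $M \cong R/(x_1, \ldots, x_d)$. The delicate point is to show that no \emph{exotic} matrix factorization of $\sum \tilde f_i y_i$ achieves the minimum rank --- one must control the full higher-homotopy data and not merely the ranks of the constituent free modules.
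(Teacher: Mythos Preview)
Your proposal diverges substantially from the paper's proof, and it has genuine gaps in both cases.

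For case (2), the reduction to case (1) does not work as stated. Completing $R$ and writing $\hat R = Q/I$ with $Q$ regular does \emph{not} present $\hat R$ as a complete intersection unless $I$ happens to be generated by a regular sequence; there is no ``cutting down to a complete-intersection presentation'' in general. If instead you mean to view $M$ as a $Q$-module (which does have finite projective dimension since $Q$ is regular), you would obtain $\sum_i \beta_i^Q(M) \geq 2^{\dim Q}$, but this says nothing about the Betti numbers $\beta_i^R(M)$ over $R$. The paper handles case (2) by a genuinely different device: it replaces the Euler characteristic $\chi$ by the \emph{Dutta multiplicity} $\chi_\infty$, defined via Frobenius, and invokes results of Roberts (positivity of $\chi_\infty$) and Kurano--Roberts ($\chi_\infty \circ \psi^2 = 2^d \chi_\infty$ for arbitrary $R$ of characteristic $p$). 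No complete-intersection hypothesis on $R$ is needed.

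For case (1), the step ``reducing modulo $\fm_Q$ and specializing the $y_i$ at a generic point yields a module over a Clifford algebra $\operatorname{Cliff}(q)$ with $q$ non-degenerate of rank $d$'' is not justified, and I do not see how to make it work. After reducing modulo $\fm_Q$ the potential $W = \sum \tilde f_i y_i$ vanishes (since each $\tilde f_i \in \fm_Q$), so you obtain a $2$-periodic complex over $k[y_1,\dots,y_c]$ with no evident Clifford structure; the promised quadratic form $q$ of rank $d$ never appears. Bounds on ranks of matrix factorizations via Clifford modules (as in Buchweitz--Greuel--Schreyer) apply when the potential is itself a non-degenerate quadratic form, which is not the situation here.

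The paper's argument is entirely different and much shorter. It uses the second Adams operation $\psi^2$ on $K_0^{\mathrm{fl}}(R)$: when $\tfrac12 \in R$ one has $\psi^2[F_\bullet] = [S^2 F_\bullet] - [\Lambda^2 F_\bullet]$, and Gillet--Soul\'e proved $\chi \circ \psi^2 = 2^d \chi$ for complete intersections. Combining this with the elementary bounds
\[
\chi(S^2 F_\bullet) - \chi(\Lambda^2 F_\bullet) \;\leq\; \sum_i \ell_R\, H_i(F_\bullet \otimes_R F_\bullet) \;\leq\; \ell_R(M)\sum_i \beta_i(M)
\]
gives $2^d \ell_R(M) \leq \ell_R(M) \sum_i \beta_i(M)$. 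For the equality case, the argument is again direct: equality forces $H_0(\Lambda^2 F_\bullet) = \Lambda^2 M = 0$, so $M = R/I$ is cyclic, and forces $F_\bullet \otimes_R R/I$ to have trivial differential, so $I/I^2$ is $R/I$-free, whence $I$ is generated by a regular sequence by Ferrand--Vasconcelos. No structural classification of minimal-rank matrix factorizations is needed.
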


To see that Theorem \ref{introthm1} follows from Theorem \ref{introthm2}, with the notation of the first theorem,  let
$\fp$ be a minimal prime containing $\ann_R(M)$ of height $c$. Then
$\dm(R_\fp) = c$, $M_\fp$ has finite length, and $\beta_i(M_\fp) \leq \rank_R(P_i)$ for all $i$. Moreover, if 
$M$ is $2$-torsion free, then $2 \notin \fp$ and hence is invertible in $R_\fp$.

I thank Seth Lindokken,  Michael Brown, Claudia Miller, Peder Thompson and Luchezar Avramov for useful conversations
about this paper.

\section{Complete intersections of residual characteristic not $2$}

In this section we prove part (1) of Theorem \ref{introthm2} and the assertion  concerning when the equation 
$\sum_i \b_i(M) = 2^d$  holds; see Theorem \ref{thm1} below.  %Part (2) of Theorem \ref{introthm2} is proven in the next section.

For any local ring $(R, \fm, k)$, let $\Perf^\fl(R)$ be the category of bounded complexes of finite rank free $R$-modules $F_\cdot$ such that $H_i(F_\cdot)$ 
has finite length for all $i$, and define $K_0^\fl(R)$ to be the Grothendieck group of $\Perf^\fl(R)$. 
Recall that $K_0^\fl(R)$ is generated by isomorphism classes of objects of $\Perf^\fl(R)$, modulo relations coming from short exact sequences and quasi-isomorphisms.

Let $\psi^2: K_0^\fl(R) \to K_0^\fl(R)$ be the $2\nd$ Adams operation, as defined by Gillet-Soul\'e \cite{GS87}. Gillet-Soul\'e's definition involves the
Dold-Kan correspondence between complexes and simplicial modules, but
if $2$ is invertible in $R$, then $\psi^2$ admits a simpler description: For $F_\cdot \in \Perf^\fl(R)$ let
$T^2(F_\cdot)$ denote its second tensor power $F_\cdot \otimes_R F_\cdot$
endowed with the action of the 
symmetric group $\Sigma_2 = \langle \tau \rangle$
given  by 
$$
\tau \cdot (x \otimes y) = (-1)^{|x||y|} y \otimes x.
$$ 
Since $\frac12 \in R$,  we have a direct sum decomposition
$T^2(F_\cdot) = S^2(F_\cdot) \oplus \L^2(F_\cdot)$, 
where $S^2(F_\cdot) := \ker(\tau - \id)$ and $\L^2(F_\cdot) := \ker(\tau + \id)$. By \cite[6.14]{BMTW} we have
\begin{equation} \label{eqn1}
\psi^2[F_\cdot] = [S^2(F_\cdot)] - [\L^2(F_\cdot)] \in K_0^\fl(R).
\end{equation}

Let $\len_R$ denote the length of an $R$-module and write
$\chi: K_0^\fl(R) \to \Z$ for the Euler characteristic map: 
$\chi\left([F_\cdot]\right) = \sum_i (-1)^i \len_R H_i(F_\cdot)$.

\begin{prop}[Gillet-Soul\'e] (See \cite[7.1]{GS87}.) \label{prop1} 
If $R$ is a local complete intersection of dimension $d$, then $\chi \circ \psi^2 = 2^d \cdot \chi$. 
\end{prop}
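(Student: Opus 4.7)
The plan is to prove the identity $\chi \circ \psi^2 = 2^d \cdot \chi$ of group homomorphisms $K_0^\fl(R) \to \Z$ by computing the action of $\psi^2$ on a distinguished class whose value under $\chi$ is nonzero, and then propagating the identity to the rest of $K_0^\fl(R)$.

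The first step is to establish multiplicativity of $\psi^2$. The group $K_0^\fl(R)$ is a module over the Grothendieck ring $K_0(\Perf(R))$ via derived tensor product, and $\psi^2$ is compatible with this module structure: $\psi^2(g \cdot x) = \psi^2(g) \cdot \psi^2(x)$ for $g \in K_0(\Perf(R))$ and $x \in K_0^\fl(R)$. Using (\ref{eqn1}) this reduces to the $\Sigma_2$-isotypic decomposition of $T^2(F \otimes G) \cong T^2 F \otimes T^2 G$, where the free $\Sigma_2$-summands contribute equally to $S^2$ and $\Lambda^2$ and cancel in the difference $[S^2] - [\Lambda^2]$.

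The second step is the central computation, carried out on a Koszul complex. Choose a system of parameters $x_1, \ldots, x_d \in \fm$; since $R$ is Cohen-Macaulay, the Koszul complex $K := K(x_1, \ldots, x_d) = \bigotimes_{i=1}^d K(x_i)$ lies in $\Perf^\fl(R)$ with $\chi([K]) = \ell_R(R/(x_1, \ldots, x_d)R) > 0$. A direct inspection of the $\Sigma_2$-action on $K(x_i) \otimes K(x_i)$ shows $S^2 K(x_i)$ is the two-term complex $(R \xrightarrow{2x_i} R)$, whose class in $K_0(\Perf(R))$ equals $[K(x_i)]$ since $2$ is a unit in $R$, while $\Lambda^2 K(x_i)$ is a shift of $K(-x_i) \cong K(x_i)$ with class $-[K(x_i)]$. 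Thus $\psi^2[K(x_i)] = [K(x_i)] + [K(x_i)] = 2\,[K(x_i)]$, and multiplicativity from Step 1 yields $\psi^2[K] = 2^d\,[K]$. Applying $\chi$ confirms the identity on the class $[K]$.

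The third step, which I expect to be the main obstacle, is to extend the identity from $[K]$ to all of $K_0^\fl(R)$. The plan is to invoke the local Riemann-Roch framework of Gillet-Soul\'e: there is a filtration on $K_0^\fl(R) \otimes \Q$ by codimension of support, whose associated graded pieces are detected by local Chern characters and on which $\psi^2$ acts by the scalar $2^i$ in codimension $i$. Because $R$ is Cohen-Macaulay, the Euler characteristic $\chi$ factors through the top graded piece, on which Step 2 identifies $[K]$ as a generator and $\psi^2$ as multiplication by $2^d$. The technical core of the argument is constructing this filtration and verifying the eigenvalue formula in the complete intersection (rather than merely regular) setting; an alternative is to induct on the codimension $c$ of a presentation $R = Q/(f_1, \ldots, f_c)$ with $Q$ regular local, using a change-of-rings argument that relates $\chi$ and $\psi^2$ over $R$ and $Q$ so as to reduce to the regular case, where the identity follows immediately from $K_0^\fl(Q) \cong \Z \cdot [Q/(y_1, \ldots, y_n)]$ and the computation of Step 2.
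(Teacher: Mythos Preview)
The paper does not prove this proposition at all; it is stated with attribution to Gillet--Soul\'e and a citation to \cite[7.1]{GS87}, and is used as a black box. So there is no ``paper's own proof'' to compare against, and your Step~3 --- which you correctly flag as the main obstacle --- simply names the Gillet--Soul\'e local Riemann--Roch machinery, i.e.\ the very source being cited. In that sense your proposal does not constitute an independent proof: the entire content of the proposition lies in Step~3, and you defer it back to \cite{GS87}.

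There is also a genuine gap in Step~2. Your multiplicativity argument runs through the module structure $K_0(\Perf(R)) \times K_0^\fl(R) \to K_0^\fl(R)$, but for $d \geq 2$ the factorization $[K] = [K(x_1)] \cdot [K(x_2,\dots,x_d)]$ does not live in this pairing: $K(x_2,\dots,x_d)$ has homology of positive dimension, so its class is not in $K_0^\fl(R)$, while in $K_0(\Perf(R)) \cong \Z$ every $[K(x_i)]$ is zero, making the identity $\psi^2[K(x_i)] = 2[K(x_i)]$ vacuous there. What does work is to decompose $T^2(K) \cong \bigotimes_i T^2(K(x_i))$ as a $\Sigma_2$-representation at the level of complexes, use your identifications $S^2 K(x_i)\cong K(x_i)$ and $\L^2 K(x_i)\cong K(x_i)[1]$ (which do require $2$ invertible, an assumption not present in the proposition as stated), and then read off $[S^2 K]-[\L^2 K] = 2^d[K]$ in $K_0^\fl(R)$ directly. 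That fixes the Koszul computation, but it still only verifies the identity on the single class $[K]$; extending it to all of $K_0^\fl(R)$ is precisely the Gillet--Soul\'e theorem.
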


\begin{defn} \label{qrdefnot2}
A local ring $(R, \fm, k)$ of dimension $d$ such that $2$ is invertible in $R$ will be called a {\em quasi-Roberts ring} if
there we have an equality of maps $\chi \circ \psi^2 = 2^d \cdot \chi$.
\end{defn}

\begin{thm} \label{thm1} Let $(R, \fm, k)$ be a local ring of dimension $d$ such that $2$ is invertible in $R$. If $R$ is a
quasi-Roberts ring,  then for any non-zero $R$-module $M$ of finite length and finite projective dimension, we have 
$\sum_i \beta_i(M) \geq 2^d$. 

Moreover,
if $\sum_i \beta_i(M) = 2^d$ then $M \cong R/(y_1, \dots, y_d)$ for some regular sequence of elements $y_1, \dots, y_d \in \fm$.
\end{thm}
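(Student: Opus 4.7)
The plan is to apply the quasi-Roberts identity $\chi \circ \psi^2 = 2^d \chi$ to the minimal free resolution of $M$ and then squeeze the resulting quantity between $2^d \len_R(M)$ on one side and $(\sum_i \b_i(M)) \cdot \len_R(M)$ on the other. Let $F_\bullet$ be the minimal free resolution of $M$ and set $\b := \sum_i \b_i(M)$. Then $\chi([F_\bullet]) = \len_R(M)$, so the hypothesis combined with equation~(\ref{eqn1}) gives
$$
2^d \len_R(M) = \chi(\psi^2[F_\bullet]) = \chi(S^2(F_\bullet)) - \chi(\L^2(F_\bullet)).
$$
Applying the triangle inequality together with the crude bound $|\chi(C_\bullet)| \le \sum_i \len H_i(C_\bullet)$, and using the decomposition $T^2(F_\bullet) = S^2(F_\bullet) \oplus \L^2(F_\bullet)$ (valid since $\tfrac12 \in R$), yields $2^d \len_R(M) \le \sum_i \len H_i(T^2(F_\bullet)) = \sum_i \len \Tor_i^R(M,M)$.

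To finish the inequality, I will compute $\Tor_i^R(M,M) = H_i(F_\bullet \otimes_R M)$ from the minimal resolution and invoke the elementary bound $\sum_i \len H_i(C_\bullet) \le \sum_i \len(C_i)$, valid for any bounded complex of finite length modules (and an equality iff every differential vanishes). This gives $\sum_i \len \Tor^R_i(M,M) \le \b \cdot \len_R(M)$, and dividing by $\len_R(M) > 0$ yields $\b \ge 2^d$.

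For the moreover clause, suppose $\b = 2^d$; each inequality above must then be an equality. The critical consequence is that $F_\bullet \otimes_R M$ has zero differentials, forcing every matrix entry of every $d_i$ to annihilate $M$, and hence to lie in $I := \ann_R(M)$. Consequently $F_\bullet \otimes_R R/I$ also has zero differentials, so $M \cong F_0 \otimes_R R/I \cong (R/I)^{\b_0}$. Since $R/I$ is a direct summand of $M$ it has finite projective dimension, and since $I$ is $\fm$-primary it is Artinian. The minimal free resolution of $(R/I)^{\b_0}$ is $\b_0$ copies of that of $R/I$, so $2^d = \b_0 \cdot \sum_i \b_i(R/I)$; applying the already-proved inequality to $R/I$ forces $\b_0 = 1$, giving $M \cong R/I$ cyclic with total Betti number exactly $2^d$.

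The hardest step is to show that $I$ is generated by a regular sequence of length $d$. Here the complete intersection hypothesis is essential. The plan is to combine (a) the identity $\sum_i (-1)^i \b_i(R/I) = \rank_R(R/I) = 0$, which together with $\sum_i \b_i(R/I) = 2^d$ forces $\sum_{i\text{ even}} \b_i = \sum_{i\text{ odd}} \b_i = 2^{d-1}$; (b) the further equality conditions from the Adams-operation argument, namely that the homologies of $S^2(F_\bullet)$ and $\L^2(F_\bullet)$ are each concentrated in a single parity; and (c) Krull's principal ideal theorem, which gives $\b_1(R/I) \ge d$. Together these ingredients should force $\b_1(R/I) = d$, at which point the $d$ minimal generators of the $\fm$-primary ideal $I$ automatically form a regular sequence in the Cohen-Macaulay local ring $R$.
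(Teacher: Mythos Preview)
Your argument for the inequality $\sum_i \beta_i(M) \geq 2^d$ is essentially the paper's: bound $\chi(S^2F)-\chi(\Lambda^2F)$ above by $\sum_i \len_R \Tor_i^R(M,M)$ and then by $\len_R(M)\cdot\sum_i\beta_i(M)$. Your route to showing $M$ is cyclic in the equality case is different from the paper's but valid: you use that the vanishing differentials of $F_\bullet\otimes_R M$ force $M\cong (R/I)^{\beta_0}$ with $I=\ann_R(M)$, and then feed $R/I$ back into the inequality to get $\beta_0=1$. The paper instead observes that $H_0(\Lambda^2 F_\bullet)\cong \Lambda^2 M$ must vanish, forcing $M$ cyclic in one stroke.

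The final step, however, has a genuine gap. First, there is no ``complete intersection hypothesis'' in Theorem~\ref{thm1}: it is stated for quasi-Roberts rings, so you cannot simply assume $R$ is Cohen--Macaulay. Second, and more seriously, you list ingredients (a), (b), (c) and assert they ``should force $\beta_1(R/I)=d$'' without indicating how; I do not see that they do. Item~(a) gives only $\beta_1+\beta_3+\cdots=2^{d-1}$, item~(c) gives $\beta_1\geq d$, and it is unclear what leverage (b) provides toward an \emph{upper} bound on $\beta_1$.

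The paper closes this gap cleanly with an observation you already have in hand. Since $F_\bullet\otimes_R R/I$ has zero differentials,
\[
I/I^2 \;\cong\; \Tor_1^R(R/I,R/I) \;\cong\; F_1\otimes_R R/I
\]
is a free $R/I$-module. Because $R/I$ also has finite projective dimension, the theorem of Ferrand and Vasconcelos (see \cite[2.2.8]{BrunsHerzog}) applies directly and shows that $I$ is generated by a regular sequence; no Cohen--Macaulay hypothesis on $R$ is needed, and no separate computation of $\beta_1$ is required.
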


\begin{proof}
Let $F_\cdot$ be the minimal free resolution of $M$, so that $\chi(F_\cdot) = \len_R(M)$ and 
$\rnk_R(F_i) = \beta_i(M)$.  Using \eqref{eqn1} we get
\begin{equation} \label{E124a}
\begin{aligned}
2^d \cdot \len_R(M)    = \chi(\psi^2(F)) 
& =   \sum_i (-1)^i \len_RH_i(S^2(F_\cdot))  - \sum_j  (-1)^j \len_RH_j(\L^2(F_\cdot))  \\ 
& \leq   \sum_{\text{$i$ even}} \len_RH_i(S^2(F_\cdot))  + \sum_{\text{$i$ odd}} \len_RH_i(\L^2(F_\cdot)).  \\ 
\end{aligned}
\end{equation}
Since $S^2(F_\cdot)$ and $\L^2(F_\cdot)$ are direct summands of $F_\cdot \otimes_R F_\cdot$, 
\begin{equation}\label{E124b}
\sum_{\text{$i$ even}} \len_RH_i(S^2(F_\cdot))  + \sum_{\text{$i$ odd}} \len_RH_i(\L^2(F_\cdot))
\leq   \sum_i \len_RH_i(F_\cdot \otimes_R F_\cdot).
\end{equation}
For each $i$, $H_i(F_\cdot \otimes_R F_\cdot) \cong H_i(F_\cdot \otimes_R M)$ 
is a subquotient of $F_i \otimes_R M$ and thus
\begin{equation}\label{E124c}
\len_R H_i(F_\cdot \otimes_R M)
\leq \len_R(F_i \otimes_R M) = 
\rk(F_i) \cdot \len_R(M) = \beta_i(M) \cdot \len_R(M).
\end{equation}
Putting the inequalities \eqref{E124a}, 
\eqref{E124b}, and \eqref{E124c} together yields
$$
2^d \cdot \len_R(M) \leq  \len_R(M)  \cdot \sum_i \beta_i(M)
$$
and since $\len_R(M) > 0$, we conclude $\sum_i \beta_i(M) \geq 2^d$. 

Now suppose 
$\sum_i \beta_i(M) = 2^d$. Then the inequalities 
\eqref{E124a}, \eqref{E124b}, and \eqref{E124c} 
must actually be equalities, which means that 
$H_i(S^2(F_\cdot))  = 0$ for all odd $i$, 
$H_j(\L^2(F_\cdot))  = 0$ for all even $j$, and
$F_\cdot \otimes_R M$ has trivial differential.
Since  $H_0(\L^2(F_\cdot)) \cong \L^2(M)$ is the classical second exterior power, $M$ must be cyclic; i.e., of the form $R/I$ for some ideal $I$. 
Since $F_\cdot \otimes_R R/I$ has trivial differential, $I/I^2 \cong \Tor_1^R(R/I, R/I)$ is free as an
$R/I$-module, and thus a result of
Ferrand and Vasconcelos (see \cite[2.2.8]{BrunsHerzog}) gives that
$I$ is generated by a regular sequence of elements.
\end{proof}

\section{Rings of odd characteristic}

In this section we prove part (2) of Theorem \ref{introthm2}.
The main idea is to replace the Euler characteristic $\chi$ occurring in the proof of part (1)  with the {\em Dutta
  multiplicity}.

\begin{defn} Assume $(R, \fm, k)$ is a complete local ring of dimension $d$ that contains $\Z/p$ as a subring for some
  prime $p$ and that $k$ is a perfect field. 
For $F_\cdot \in \Perf^\fl(R)$, define
$$
\chi_\infty(F_\cdot) = \lim_{e \to \infty} \frac{\chi(\varphi^eF_\cdot)}{p^{de}}
$$
where $\varphi^e$ denotes extension of scalars along the $e\th$ iterate of the Frobenius endomorphism of $R$.
The limit is known to exist by, e.g., \cite[7.3.3]{RobertsBook}.
\end{defn}

\begin{proof}[Proof of Theorem \ref{introthm2} part (2)]
There is a faithfully flat map $(R, \fm, k)  \to (R', \fm', k')$ of local rings  
such that $\fm \cdot R' = \fm'$, $R'$ is complete, and $k'$ is algebraically closed; see \cite[0.10.3.1]{EGA}.
Letting $M' := M \otimes_R R'$, we have that $M'$  is a non-zero $R'$-module of finite length and finite projective dimension,   
$\beta_i^{R'}(M') = \beta_i^R(M)$ for all $i$,  
and $\dm(R') = \dm(R)$.  We may therefore assume $R$ is complete with algebraically closed residue field.

Let $F_\cdot$ be the minimal free resolution of $M$. 
Since $R$ is complete with perfect residue field, a result of Roberts \cite[7.3.5]{RobertsBook} gives 
\begin{equation} \label{eqn6}
\chi_\infty(F_\cdot) > 0
\end{equation}
and a result of Kurano-Roberts \cite[3.1]{KuranoRoberts} gives (using \eqref{eqn1})
\begin{equation} \label{eqn5}
\chi_\infty(S^2(F_\cdot)) - \chi_\infty(\L^2(F_\cdot)) = \chi_\infty(\psi^2(F_\cdot)) = 2^d \cdot \chi_\infty(F_\cdot).
\end{equation}
For each $e \geq 0$ we have
$\varphi^e S^2(F_\cdot) \cong S^2(\varphi^eF_\cdot)$ and $\varphi^e \L^2(F_\cdot)) \cong \L^2(\varphi^eF_\cdot)$ and thus
$$
\begin{aligned}
\chi_\infty(S^2(F_\cdot)) & = \lim_{e \to \infty} \frac{1}{p^{de}} \sum_i (-1)^i \len_R H_i(S^2(\varphi^e F_\cdot)) \\
\chi_\infty(\L^2(F_\cdot)) & = \lim_{e \to \infty} \frac{1}{p^{de}} \sum_i (-1)^i \len_R H_i(\L^2(\varphi^e F_\cdot)). \\
\end{aligned}
$$
As in the proof of Theorem \ref{thm1}, for a fixed $e$ we have
$$
%\begin{aligned}
%& 
\frac{1}{p^{de}} \sum_i (-1)^i \len_R H_i(S^2(\varphi^e F_\cdot))
- \frac{1}{p^{de}} \sum_i (-1)^i \len_R H_i(\L^2(\varphi^e F_\cdot)) %\\
%& \leq 
%\frac{1}{p^{de}} \sum_{i \text{ even}} \len_R H_i(S^2(\varphi^e F_\cdot))
%+ \frac{1}{p^{de}} \sum_{j \text{ odd}} \len_R H_j(\L^2(\varphi^e F_\cdot)) \\
%& 
\leq  \sum_j \len_R H_j(T^2(\varphi^e F_\cdot)). %\\
%\end{aligned}
$$

By \cite[1.7]{PS}, the complex $\varphi^e(F_\cdot)$ is the minimal free resolution of the finite length module $\varphi^e(M)$, for each $e \geq 0$. 
As in the proof of Theorem \ref{thm1}, for each $i$ we have
$$
\len_R H_i(T^2(\varphi^e F_\cdot)) \leq \rnk(\varphi^e F_i) \cdot \ell_R(\varphi^e M) = \beta_i(M) \cdot \chi(\varphi^e F_\cdot).
$$

We have proven that
$$
\frac{1}{p^{de}} \sum_i (-1)^i \len_R H_i(\varphi^e S^2( F_\cdot))
- \frac{1}{p^{de}} \sum_i (-1)^i \len_R H_i(\varphi^e \L^2(F_\cdot)) 
\leq \frac{1}{p^{de}} \chi(\varphi^e F_\cdot) \cdot \sum_i \beta_i(M)
$$
holds for each $e \geq 0$. 
Taking limits and using \eqref{eqn5} gives
$$
2^d \cdot  \chi_\infty(F_\cdot) \leq \chi_\infty(F_\cdot) \cdot \sum_i \beta_i(M).
$$
Since $\chi_\infty(F_\cdot) > 0$ by \eqref{eqn6}, we conclude
$\sum_i \beta_i(M) \geq 2^d$.
\end{proof}

%%%%%%%%%%%%%%%%%%%%%%%
%%%%%%%%%%%%%%%%%%%%%%%
\bibliographystyle{plain}

\def\cprime{$'$}

%\bibliography{refs}

\begin{thebibliography}{1}

\bibitem{BMTW}
Michael~K. Brown, Claudia Miller, Peder Thompson, and Mark~E. Walker.
\newblock Cyclic {A}dams operations.
\newblock {\em J. Pure Appl. Algebra}, 2016.
\newblock In press. Preprint available at arXiv:1601.05072.

\bibitem{BrunsHerzog}
Winfried Bruns and J{\"u}rgen Herzog.
\newblock {\em Cohen-{M}acaulay rings}, volume~39 of {\em Cambridge Studies in
  Advanced Mathematics}.
\newblock Cambridge University Press, Cambridge, 1993.

\bibitem{BE}
David~A. Buchsbaum and David Eisenbud.
\newblock Algebra structures for finite free resolutions, and some structure
  theorems for ideals of codimension {$3$}.
\newblock {\em Amer. J. Math.}, 99(3):447--485, 1977.

\bibitem{EGA}
Jean Dieudonn{\'e} and Alexander Grothendieck.
\newblock \'{E}l\'ements de g\'eom\'etrie alg\'ebrique.
\newblock {\em Inst. Hautes \'Etudes Sci. Publ. Math.}, 4, 8, 11, 17, 20, 24,
  28, 32, 1961--1967.

\bibitem{GS87}
H.~Gillet and C.~Soul\'e.
\newblock Intersection theory using {A}dams operations.
\newblock {\em Inventiones Mathematicae}, 90:243--277, 1987.

\bibitem{Ha}
Robin Hartshorne.
\newblock Algebraic vector bundles on projective spaces: a problem list.
\newblock {\em Topology}, 18(2):117--128, 1979.

\bibitem{KuranoRoberts}
Kazuhiko Kurano and Paul~C. Roberts.
\newblock Adams operations, localized {C}hern characters, and the positivity of
  {D}utta multiplicity in characteristic {$0$}.
\newblock {\em Trans. Amer. Math. Soc.}, 352(7):3103--3116, 2000.

\bibitem{PS}
C.~Peskine and L.~Szpiro.
\newblock Dimension projective finie et cohomologie locale. {A}pplications \`a
  la d\'emonstration de conjectures de {M}. {A}uslander, {H}. {B}ass et {A}.
  {G}rothendieck.
\newblock {\em Inst. Hautes \'Etudes Sci. Publ. Math.}, (42):47--119, 1973.

\bibitem{RobertsBook}
Paul~C. Roberts.
\newblock {\em Multiplicities and {C}hern classes in local algebra}, volume 133
  of {\em Cambridge Tracts in Mathematics}.
\newblock Cambridge University Press, Cambridge, 1998.

\end{thebibliography}
%%%%%%%%%%%%%%%%%%%%%%%
%%%%%%%%%%%%%%%%%%%%%%%

\end{document}